\newtheorem{theorem}{Theorem}[section]
\newtheorem{proposition}[theorem]{Proposition}
\theoremstyle{definition}
\newtheorem{definition}[theorem]{Definition}
\numberwithin{equation}{section}
\renewcommand{\geq}{\geqslant}
\renewcommand{\leq}{\leqslant}
\title{$X^*$ with weak* uniform Kadec-Klee property has Property($K^*$)}
\author[Tim Dalby]{Tim Dalby}
\date{\today}
\keywords{weak* uniform Kadec-Klee, property($K^*$)}
\subjclass[2010]{46B10, 47H09, 47H10}
\email{tim\_dalby@bigpond.com}
\begin{document}

\parindent = 0pt
\parskip = 8pt

\begin{abstract}

It is shown that if the dual of a Banach space, $X^*$, where the dual ball is weak* sequentially compact, has the weak* uniform Kadec-Klee property then $X^*$ has Property($K^*$).  An example is given where the reverse implication does not hold.  That is, there is a Banach space $X$ whose dual, $X^*$, has Property($K^*$) but $X^*$ does not have the weak* uniform Kadec-Klee property.

\end{abstract}

\maketitle

\section{Introduction}

A Banach space, $X$, has the weak fixed point property (w-FPP) if every nonexpansive mapping, $T$, on every weak compact convex nonempty subset, $C$, has a fixed point.  If the subsets are closed and bounded instead of weak compact then the property is called the fixed point property (FPP).  The past 40 or so years has seen a number of Banach space properties shown to imply the w-FPP.  Some such properties are weak normal structure, Opial's condition, Property($K$)  and Property($M$).  There has also been interest in how these properties are linked or interact. See for example [1], [4] and [9].

In [6] Dowling, Randrianantoanina and Turett showed that if $X$ is a Banach space with the dual unit ball, $B_{X^*}$, weak* sequentially compact and $X^*$ has the weak* uniform Kadec-Klee property then $X$ has the w-FPP.   In a \textquoteleft Note added in proof \textquoteright\, the authors thanked Garc\'ia-Falset for pointing out that if $X^*$ has the weak* uniform Kadec-Klee  propery then $R(X) < 2.$  This last property, in turn, implies $X$ has the w-FPP.  The proof of that last result appears in [7].

\bigskip

On the other hand, Dalby in [2] showed that $X^*$ having Property($K^*$) implies that $R(X) < 2.$   So could there be a link between these two properties in $X^*?$  This paper produces that link; if $X^*$ has the weak* uniform Kadec-Klee property then $X^*$ has Property($K^*$).

Definitions for the properties mentioned above appear in the next section.  Note that because the w-FPP is separably determined, the Banach spaces are assumed to be separable.

\bigskip

To finish things off, an example is produced where $X^*$ has Property($K^*$) but $X^*$ does not have the weak* uniform Kadec-Klee property.

\section {Definitions}

Below are the definitions of properties that will be used in the next section.

\begin{definition}

Sims, [12]

A Banach space $X$ has property($K$) if there exists $K \in [0, 1)$ such that whenever $x_n \rightharpoonup 0, \lim_{ n \rightarrow \infty} \| x_n \| = 1 \mbox{ and }  \liminf_{n \rightarrow \infty} \| x_n - x \|  \leq 1 \mbox{ then } \| x \| \leq K.$

If the sequence is in $B_{X^*}$ and is weak* convergent to zero then the property is called Property($K^*$).

\end{definition}

\begin{definition}

Lin, Tan and Xu, [10]

Opial's modulus is
\[ r_X(c) := \inf \{ \liminf_{n\rightarrow \infty} \| x_n - x \| - 1: c \geq 0, \| x \| \geq c,  x_n \rightharpoonup 0 \mbox{ and }\liminf_{n\rightarrow \infty} \| x_n \| \geq 1 \}. \]

Note that $X$ has uniform Opial's condition if $r_X(c) > 0$ for all $c > 0.$  See [10] or [9] for more details.

\end{definition}

\begin{definition}

A dual of a Banach space has the weak* uniform Kadec-Klee property if for every $\epsilon > 0$ there exist a $\delta > 0$ such that if $(x_n^*)$ is in $B_{X^*}$ and converges weak* to $x^*$ and the separation constant, 

\[ \mbox{ sep } (x_n^*) := \inf  \{ \| x_n^* - x_m^* \|: m \ne n \} > \epsilon \]

then $\| x_n^* \| < 1 - \delta.$

\end{definition}

\section{Result}

In [1] Dalby showed that if $X$ is a separable Banach space then $X$ has Property($K$) if and only if $r_X(1) > 0.$  The proof readily transfers to the dual so Property($K^*$) is equivalent to $r_{X^*}(1) > 0.$

\begin{proposition}   Let $X$ be a Banach space with $B_{X^*}$ weak* sequentially compact and where $X^*$ has the weak* uniform Kadec-Klee property then $r_{X^*}(1) > 0$.
\end{proposition}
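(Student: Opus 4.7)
The plan is to establish $r_{X^*}(1) > 0$ by a contradiction argument, converting a near-failure of Opial's modulus into a sequence that witnesses a violation of the weak* uniform Kadec--Klee property.

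First I would unravel the infimum. Suppose for contradiction that $r_{X^*}(1) \leq \eta$ for arbitrarily small $\eta > 0$. Then we can pick $x^* \in X^*$ with $\|x^*\|\geq 1$ and a weak* null sequence $(x_n^*)$ with $\liminf \|x_n^*\| \geq 1$ and $\liminf_n \|x_n^* - x^*\| \leq 1 + \eta$. By passing to a subsequence (using weak* sequential compactness of $B_{X^*}$ for any auxiliary limits that arise), I may assume $\|x_n^*\| \to \alpha \geq 1$ and $\|x_n^* - x^*\| \to \beta$. Weak* lower semicontinuity of the norm, applied to the weak* limit $x_n^* - x^* \rightharpoonup^* -x^*$, immediately gives $\beta \geq \|x^*\| \geq 1$; the goal is to improve this to $\beta \geq 1 + c$ for some $c = c(\delta) > 0$ independent of the sequence.

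Next I would extract a subsequence with a uniform lower bound on separation. For each fixed $n$, since $x_m^* \rightharpoonup^* 0$, we have $x_n^* - x_m^* \rightharpoonup^* x_n^*$ as $m \to \infty$, so $\liminf_m \|x_n^* - x_m^*\| \geq \|x_n^*\|$. A standard diagonal/inductive extraction then produces a subsequence with $\mathrm{sep}(x_n^*) \geq 1/2$. Renormalising, I set $z_n^* = (x_n^* - x^*)/(\beta + 1/n)$; for large $n$ these lie in $B_{X^*}$, they weak* converge to $-x^*/\beta$, and their separation is at least $\mathrm{sep}(x_n^*)/(\beta + 1) \geq 1/(2(\beta+1))$. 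Provided $\beta$ is bounded above (say $\beta \leq 2$; otherwise the desired inequality $\beta - 1 > 0$ is automatic), this separation is bounded below by a universal constant $\epsilon_0$, say $\epsilon_0 = 1/6$.

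Now I would apply the weak* uniform Kadec--Klee property to the sequence $(z_n^*)$ with parameter $\epsilon_0$, obtaining the associated $\delta_0 > 0$. This forces $\|{-x^*/\beta}\| \leq 1 - \delta_0$, i.e.\ $\beta \geq \|x^*\|/(1-\delta_0) \geq 1/(1-\delta_0)$, hence $\beta - 1 \geq \delta_0/(1-\delta_0)$. Choosing $\eta < \delta_0/(1-\delta_0)$ at the outset contradicts $\beta \leq 1 + \eta$, so $r_{X^*}(1) \geq \delta_0/(1-\delta_0) > 0$.

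The main technical hurdle is the separation extraction step, because the weak* UKK hypothesis only sees sequences whose gaps are bounded below. Making sure that this extraction, together with the rescaling by $\beta + 1/n$, produces a sequence in $B_{X^*}$ with a \emph{uniformly} positive separation constant independent of $(x_n^*)$ and $x^*$ is what ultimately pins down the constant $\delta_0$ and hence the lower bound on $r_{X^*}(1)$.
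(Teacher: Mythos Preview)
Your argument is correct and follows essentially the same route as the paper's: assume $r_{X^*}(1)$ is not positive, extract a weak* null sequence with separation bounded below, translate by $x^*$ and rescale into $B_{X^*}$, then apply weak* UKK to force $\|x^*\|<1$, contradicting $\|x^*\|\geq 1$. The only cosmetic difference is that the paper rescales by the fixed scalar $1+\epsilon$ (so the separation of the rescaled sequence is computed in one line), whereas your $n$-dependent denominator $\beta+1/n$ makes the separation estimate a bit looser than you state---replacing it with a fixed $\beta+\eta'$ along a further subsequence removes that wrinkle without changing the argument.
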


\begin{proof}  Assume $r_{X^*}(1) \leq 0$ then $r_{X^*}(1) = 0$.  Given $0 < \epsilon < 1$ there exists $(x_n^*)$ and $x^*$ in $X$ where $x_n^*\stackrel{*} \rightharpoonup 0, \liminf_{n \rightarrow\infty} \| x_n^* \| \geq 1, \| x^* \| \geq 1$ and

\[ \liminf_{n \rightarrow \infty} \| x_n^* + x^* \| - 1 < r_{X^*}(1) + \epsilon = \epsilon. \] 

Then $\liminf_{n \rightarrow\infty} \| x_n^* + x^* \| < 1 + \epsilon$ and by taking subsequences, if necessary, we may assume that

\[ \frac{x_n^* + x^*}{1 + \epsilon}\leq 1 \mbox{ for all } n. \] 

Again by taking subsequences, we may assume that sep$(x_n^*) > \epsilon.$  Then

\[ \mbox{sep}\left ( \frac{x_n^*+x^*}{1 + \epsilon}\right ) = \mbox{sep} \left (\frac{x_n^*}{1 + \epsilon}\right ) > \frac{\epsilon}{1 + \epsilon} > \frac{\epsilon}{2}. \] 

Therefore, by the weak* uniform Kadec-Klee property of $X^*$, there exists $\delta > 0$ such that

\[ \left \| \frac{x^*}{1 + \epsilon}\right \| < 1 - \delta. \]

For $\epsilon < \delta, \, 1 \leq \| x^* \| < (1 - \delta)(1 + \epsilon) < (1 - \delta)(1 + \delta) = 1 - \delta^2$, which is a contradiction.  Therefore $r_{X^*}(1) > 0$.

\end{proof}

{\bf Example}

Let $X = (l_2 \oplus l_3 \oplus \ldots \oplus l_n \oplus \ldots)_2$ then $X$ is a reflexive Banach space that does not have the weak* uniform Kadec-Klee property.  For more details see Sims [11].  But $X$ has the uniform Opial condition, as shown below. So, in particular, $r_X(1) > 0.$

The proof that $X$ has the uniform Opial condition comes from Property(D) of Dalby and Sims [3].  A Banach space has Property(D) if there exists an increasing strictly positive function $\alpha$ on $(0, \infty)$ such that whenever

$x_n \rightharpoonup x_\infty \ne 0, \lim_{n \rightarrow\infty}\| x_n - x_\infty \| = 1, \mbox{ and } x_n^* \in J(x_n),$ we have

\[ \liminf_{n \rightarrow \infty} x_n^*(x_\infty) \geq \alpha(\| x_\infty \|). \] 

Here $J:X \rightarrow X^*$ is the duality map,

\[ J(x) = \{ x^* \in X^*: x^*(x) = \| x \|^2 \mbox{ and } \| x^* \| = \| x \|\}. \] 

Dalby and Sims showed that a Banach space, $X$, has the uniform Opial condition if and only if $X$ has property(D).

Consider $(x_n)$ in $X$ where $x_n \rightharpoonup x_\infty \ne 0 \mbox{ and } \lim_{n \rightarrow\infty}\| x_n - x_\infty \| = 1.$

Let $x_n = (x_n^p) \mbox{ and } x_\infty = (x_\infty^p) \mbox{ where } x_n^p,x_\infty^p  \in l_p .$  Then, because $l_p$ is smooth, there is a unique element, $x_n^{p,*} \mbox{ of } J(x_n^p).$  It follows that $x_n^* = (x_n^{p,*})$ is the unique element of $J(x_n).$

For each $p$, if $y_n \rightharpoonup y_\infty \ne 0 \mbox{ and } y_n^* \in J(y_n) \mbox{ then } \| y_n \|_p^{p - 2}y_n^*\rightharpoonup \| y_\infty |_p^{p-2}y_\infty^*.$  For more details see [11].

Thus $\liminf_{n \rightarrow\infty} y_n^*(y_\infty) \geq \| y_\infty \|_p^2.$ 

Let $N = \{p \in \mathbb{N}: x_\infty^p \ne 0\}$ then $\| x_\infty \| = \left ( \sum_{p \in N}\| x_\infty^p \|^2 \right )^\frac{1}{2}.$  Hence

\begin{align*}
\liminf_{n \rightarrow \infty} x_n^*(x_\infty) & = \liminf_{n \rightarrow \infty}\sum x_n^{p,*}(x_\infty^p) \\
& = \sum_{p \in N} \liminf_{n \rightarrow \infty} x_n^{p,*}(x_\infty^p) \\
& \geq \sum_{p \in N} \| x_\infty \|^2 \\
& = \| x_\infty \|^2.
\end{align*}

The required increasing strictly positive function is $\alpha(x) = x^2$ and so $X$ has the uniform Opial condition which ensures $r_X(1) > 0.$

\end{document}